\newtheorem{thm}{Theorem}
\newtheorem{cor}{Corollary}
\newtheorem{lem}{Lemma}
\newtheorem{dfn}{Definition}
\begin{document}
\begin{center}
  \textbf{\Large On Some Algorithmic and Structural Results on Flames} 
\end{center}

\bigskip

\begin{center}
  \begin{tabular}{l}
    \textbf{D\'avid Szeszl\'er}\\[\bigskipamount]
    Department of Computer Science and Information Theory\\
    Budapest University of Technology and Economics\\
    Budapest, Hungary\\[\medskipamount]
    email: \texttt{szeszler@cs.bme.hu}
  \end{tabular}
\end{center}

\bigskip

\begin{quote}
\textbf{Abstract.} A directed graph $F$ with a root node $r$ is
  called a \emph{flame} if for every vertex $v$ other than $r$ the
  local edge-connectivity value $\lambda(r,v)$ from $r$ to $v$ is
  equal to $\varrho_F(v)$, the in-degree of $v$. It is a classic,
  simple and beautiful result of Lov\'asz \cite{Lovasz73} that every
  digraph $D$ with a root node $r$ has a spanning subgraph $F$ that is
  a flame and the $\lambda(r,v)$ values are the same in $F$ as in $D$
  for every vertex $v$ other than $r$. However, the complexity of
  finding the minimum weight of such a subgraph is open
  \cite{Joo21}. In this paper we prove that this problem is solvable
  in strongly polynomial time for acyclic digraphs. Besides that, we
  prove a decomposition result of flames into a chain of smaller
  flames via edge-disjoint branchings and use this to prove a common
  generalization of Lov\'asz's above mentioned theorem and Edmonds'
  classic disjoint arborescences theorem.
\end{quote}

\medskip

\begin{quote}
\textbf{Keywords:} local rooted edge-connectivity, flame, matroid, greedoid 
\end{quote}

\section{Introduction}\label{sect:intro}

Spanning trees can be viewed as spanning subgraphs attaining a trivial
lower bound in size while preserving the connectivity of a given
undirected graph. Arborescences play a similar role for directed
graphs: they preserve rooted connectivity while attaining the same
trivial lower bound in size on such a spanning subgraph. In both
cases, finding the minimum weight of a spanning tree or arborescence
is also possible in strongly polynomial time (and the corresponding
algorithms are covered by introductory graph theory courses, see
\cite{Schrijver}).

A similar phenomenon also occurs for some more complex connectivity
related notions. Assume that the digraph $D=(V+r,E)$ is rooted
$k$-edge-connected, that is, $\lambda_D(r,v)\ge k$ holds for every
$v\in V$, where $\lambda_D(r,v)$ denotes the local edge-connectivity
from $r$ to $v$, that is, the maximum number of pairwise edge-disjoint
paths from the root $r$ to $v$ in $D$. Then the size of every spanning
subgraph of $D$ that preserves rooted $k$-edge-connectivity is
obviously at least $k|V|$ since the in-degree of every $v\in V$ is at
least $k$ in every such subgraph. It follows from Edmonds' classic
disjoint arborescences theorem \cite[Corollary 53.1b]{Schrijver} (see
Theorem~\ref{thm:Edmonds}) firstly that this trivial lower bound can
be attained and secondly that finding the minimum weight of such a
subgraph can be reduced to weighted matroid intersection and thus it
can also be solved in strongly polynomial time \cite[Section
  41.3]{Schrijver}. A different, polyhedral approach to solving the
same problem was given in \cite{Frank09}.

On the other hand, for the notions of strong connectivity of digraphs
or $k$-connectivity (either edge or vertex) of graphs finding even a
minimum size of a spanning subgraph preserving these connectivity
requirements is NP-hard as they contain the Hamilton-cycle problem.

In this paper we focus on a type of connectivity notion, rooted local
edge-connectivity of digraphs, which resembles the former ones listed
above in the sense that a trivial lower bound on the size of a
subgraph having this property is known to be attainable. On the other
hand, the complexity of finding the minimum weight of such a subgraph
is unknown \cite{Joo21}.

The following theorem was proved by Lov\'asz \cite{Lovasz73}. Note
that in this paper digraphs with a designated root node $r$ will be
denoted as $D=(V+r,E)$ meaning that $V$ does not contain $r$ and thus
the full node set of $D$ is $V+r=V\cup\{r\}$.

\begin{thm}[\cite{Lovasz73}]\label{thm:Lovasz}
  For every directed graph $D=(V+r,E)$ with a root node $r$ there
  exists a spanning subgraph $F$ such that
  $\lambda_D(r,v)=\lambda_F(r,v)=\varrho_F(v)$ holds for every vertex
  $v\in V$.
\end{thm}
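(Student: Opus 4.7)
The plan is to construct $F$ by greedy edge deletion: starting from $F := D$, I would remove edges one-by-one while maintaining the invariant that $\lambda_F(r,v) = \lambda_D(r,v)$ for every $v \in V$, and stop when no further removal is possible. Since the edges entering $v$ form an $r$-$v$ cut, $\varrho_F(v) \ge \lambda_F(r,v)$ always holds, so if the process terminates in a state where no vertex $v$ with $\varrho_F(v) > \lambda_F(r,v)$ admits a deletable entering edge, then necessarily $\varrho_F(v) = \lambda_F(r,v) = \lambda_D(r,v)$ for every $v$, as desired.

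The crux of the argument is therefore the claim that \emph{whenever $\varrho_F(v) > \lambda_F(r,v)$ for some $v \in V$, there exists an edge $e$ entering $v$ whose removal does not decrease any $\lambda_F(r,w)$.} To prove this, I would exploit the submodularity of the set function $X \mapsto \varrho_F(X) := |\delta^-_F(X)|$: the family of ``$v$-tight'' sets (i.e.\ $X \subseteq V$ with $v \in X$ and $\varrho_F(X) = \lambda_F(r,v)$) is nonempty by Menger's theorem and closed under intersection, so a unique minimal $v$-tight set $X_v$ exists. Because $\varrho_F(v) > \lambda_F(r,v) = \varrho_F(X_v)$, not every edge entering $v$ lies in $\delta^-_F(X_v)$; hence some edge $e = (u,v)$ satisfies $u \in X_v$, and this is my candidate for deletion.

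To verify that removing $e$ leaves every $\lambda_F(r,w)$ unchanged, suppose for contradiction that $\lambda_{F-e}(r,w) < \lambda_F(r,w)$ for some $w$. Then there is a $w$-tight set $Y$ (with $w \in Y$, $r \notin Y$, $\varrho_F(Y) = \lambda_F(r,w)$) such that $e \in \delta^-_F(Y)$; equivalently $v \in Y$ and $u \notin Y$. Combining the submodular inequality $\varrho_F(X_v \cap Y) + \varrho_F(X_v \cup Y) \le \varrho_F(X_v) + \varrho_F(Y)$ with the lower bounds $\varrho_F(X_v \cap Y) \ge \lambda_F(r,v)$ (since $v \in X_v \cap Y$ and $r$ is outside) and $\varrho_F(X_v \cup Y) \ge \lambda_F(r,w)$ (since $w \in X_v \cup Y$ and $r$ is outside), I would conclude that equality holds throughout. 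In particular $X_v \cap Y$ is itself $v$-tight, so by minimality $X_v \subseteq Y$; but this contradicts $u \in X_v \setminus Y$, closing the uncrossing step.

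The main obstacle is really just spotting the right uncrossing: once one commits to greedy deletion and tracks the \emph{minimal} $v$-tight set, the submodular argument falls out essentially automatically. Termination is free because $|E(F)|$ strictly decreases at each step, so after at most $|E(D)|$ iterations the procedure yields a spanning subgraph $F$ with the required property.
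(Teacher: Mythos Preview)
Your proposal is correct and follows essentially the same route as the paper's first proof (the one attributed to Lov\'asz via Lemma~\ref{lem:T_v}): your minimal $v$-tight set $X_v$ is exactly the paper's $T^F_v$, and your uncrossing argument is precisely the content of Lemma~\ref{lem:trivsubmod}(1) plus the proof of Lemma~\ref{lem:T_v}. The only cosmetic difference is that the paper phrases it as ``take a minimum-size $\lambda$-preserving subgraph and derive a contradiction,'' whereas you phrase it as an explicit greedy deletion; these are equivalent.
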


Since $\varrho_F(v)\ge\lambda_D(r,v)$ must obviously hold for the
in-degree $\varrho_F(v)$ of the vertex $v$ if the rooted local
edge-connectivity from $r$ to $v$ is preserved by the spanning
subgraph $F$, it follows that $\sum_{v\in V}\lambda_D(r,v)$ is a lower
bound on the size of a spanning subgraph that preserves all rooted
local edge-connectivity values. The above theorem claims that this
trivial lower bound can be attained for every directed graph $D$ and
the proof given in \cite{Lovasz73} also implies a polynomial algorithm
for finding such a subgraph.

In this paper we address the weighted version of this problem: finding
the minimum weight of a spanning subgraph among the ones guaranteed by
Theorem~\ref{thm:Lovasz}. Although the complexity of this problem will
remain to be open for general digraphs \cite{Joo21}, we will solve it
for acyclic digraphs -- an important class of digraphs for which this
problem still seems not obvious. In particular, we will prove that if
$D$ is acyclic then the edge-sets of the subgraphs $F$ fulfilling the
conditions of Theorem~\ref{thm:Lovasz} form the set of bases of a
matroid. This fact will yield a strongly polynomial algorithm for
finding such a subgraph.

Theorem~\ref{thm:Lovasz} was rediscovered independently in
\cite{Calvillo-Vives78} and the notion of a \emph{flame} was also
coined:

\begin{dfn}
  The directed graph $F=(V+r,E)$ with root node $r$ is a \emph{flame}
  if $\lambda_F(r,v)=\varrho_F(v)$ holds for every node $v\in V$.
\end{dfn}

Using this notion Theorem~\ref{thm:Lovasz} can also be claimed in
the following form: for every directed graph $D=(V+r,E)$ the maximum
size (that is, number of edges) of a spanning subgraph $F$ that is a
flame is $\sum_{v\in V}\lambda_D(r,v)$. With respect to this, spanning
subgraphs fulfilling the requirements of Theorem~\ref{thm:Lovasz} will
be referred to as \emph{maximal flames} henceforth.

It is an immediate corollary of Edmonds' disjoint arborescences
theorem (see Theorem~\ref{thm:Edmonds}) that if $\varrho_F(v)=k$ holds
for every vertex $v\in V$ of the flame $F=(V+r,E)$ then $F$ is the
union of $k$ edge-disjoint arborescences of $F$ rooted at $r$.  In
this paper we will also prove a generalization of this claim that
holds for every flame with no restriction on the in-degrees
(Theorem~\ref{thm:flamedecomp}).

After its publication in 1973, various possible generalizations of
Edmonds' disjoint arborescences theorem were proposed. For example,
N. Robertson asked if it was true that in every digraph with root node
$r$ there exist a family of partial arborescences rooted at $r$ such
that every $v\in V$ is entered by $\lambda(r,v)$ of them. However,
this is false, a simple counterexample was shown by Lov\'asz in
\cite{Lovasz73}. In fact, this prompted Lov\'asz to prove
Theorem~\ref{thm:Lovasz}, a weakening of this false conjecture --
which, on the other hand, does not imply Edmonds' disjoint
arborescences theorem. In this paper we will use the above mentioned
result on flames to prove a theorem that is a direct generalization of
both Theorem~\ref{thm:Lovasz} and Edmonds' disjoint arborescences
theorem (see Theorem~\ref{thm:main}).

Unless otherwise stated, we follow the notation and terminology of
\cite{Schrijver} in this paper. We also use \cite{Schrijver} as a
source of citations for results considered to be classic and widely
known. Note that spanning subgraphs of a directed graph $D=(V+r,E)$
will be identified with their set of edges; correspondingly, subflames
$F$ of $D$ will also be thought of as edge sets $F\subseteq E$.

This paper is structured as follows. Preliminary results on local
rooted edge-connectivity and flames will be given in
Section~2. Sections~3 and 4 will be dedicated to the new results
mentioned above. Finally, Section 5 concludes the paper.

\section{Preliminary results}

Assume that $D=(V+r,E)$ is a directed graph with a root node $r$ and
$v\in V$ is arbitrary. The set of vertices $X\subseteq V$ is called
\emph{$v$-tight} if $v\in X$ and $\varrho(X)=\lambda_D(r,v)$ (where
$\varrho(X)=\varrho_D(X)$ denotes the in-degree of $X$ in $D$). In
other words, $X$ is $v$-tight if $\varrho(X)$ is minimum among all
sets $X\subseteq V$ containing $v$ (by Menger's theorem
\cite[Corollary 9.1b]{Schrijver}). It follows from standard
submodularity technique that $v$-tight sets are closed under union and
intersection; indeed, if $\lambda_D(r,v)=k$ and $X$ and $Y$ are
$v$-tight then $k+k=\varrho(X)+\varrho(Y)\ge\varrho(X\cup
Y)+\varrho(X\cap Y)\ge k+k$ and hence $\varrho(X\cup Y)=\varrho(X\cap
Y)=k$. This implies that there exist both a unique largest and a
unique smallest $v$-tight subset of $V$ which will be denoted by
$M^D_v$ and $T^D_v$, respectively (omitting the superscript $D$ if
unambiguous).

The following simple lemma is also a direct consequence of
submodularity (and its claims are already present in \cite{Lovasz73},
some implicitly).

\begin{lem}\label{lem:trivsubmod}
  Let $D=(V+r,E)$ be a digraph with root node $r$ and $u,v\in V$
  arbitrary vertices.
\begin{enumerate}
 \item[(1)] If $X$ is $u$-tight, $Y$ is $v$-tight and $u\in Y$ then $X\cap Y$ is
   $u$-tight and $X\cup Y$ is $v$-tight.
 \item[(2)] If $u\in M_v$ then $M_u\subseteq M_v$.
 \item[(3)] If $u\in T_v$ then $T_u\subseteq T_v$.
\end{enumerate}
\end{lem}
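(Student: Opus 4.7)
The plan is to derive all three parts from the same submodularity-plus-Menger argument, exactly in the spirit of the reasoning already spelled out for the closure of $v$-tight sets under union and intersection. The key inputs are the submodular inequality $\varrho(X)+\varrho(Y)\ge\varrho(X\cup Y)+\varrho(X\cap Y)$ for in-degree and the Menger-theorem lower bound $\varrho(Z)\ge\lambda_D(r,w)$ whenever $w\in Z\subseteq V$.

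For part (1), let $k=\lambda_D(r,u)$ and $\ell=\lambda_D(r,v)$, so $\varrho(X)=k$ and $\varrho(Y)=\ell$. Because $u\in X$ and $u\in Y$, the vertex $u$ lies in $X\cap Y$, hence $\varrho(X\cap Y)\ge k$ by the Menger bound; because $v\in Y\subseteq X\cup Y$, we likewise get $\varrho(X\cup Y)\ge\ell$. Adding these two inequalities and comparing with submodularity squeezes everything to equality, yielding $\varrho(X\cap Y)=k$ and $\varrho(X\cup Y)=\ell$, which is the claim.

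Parts (2) and (3) then follow by instantiating (1) at the unique extremal tight sets. For (2), take $X=M_u$ and $Y=M_v$: the hypothesis $u\in M_v$ together with $u\in M_u$ makes (1) applicable, so $M_u\cup M_v$ is $v$-tight; by the maximality of $M_v$ among $v$-tight sets, $M_u\cup M_v\subseteq M_v$, i.e.\ $M_u\subseteq M_v$. For (3), take $X=T_u$ and $Y=T_v$: again $u\in T_u$ and $u\in T_v$, so by (1) the intersection $T_u\cap T_v$ is $u$-tight; minimality of $T_u$ forces $T_u\subseteq T_u\cap T_v$, hence $T_u\subseteq T_v$.

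There is no real obstacle here: the only thing to be slightly careful about is not mixing up which of $u,v$ is the ``anchor'' in each application of (1) (since (1) is asymmetric in $u$ and $v$) and correctly invoking the maximality of $M_v$ for part (2) versus the minimality of $T_u$ for part (3). Everything else is routine bookkeeping with the submodular inequality.
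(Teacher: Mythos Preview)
Your proof is correct and follows essentially the same route as the paper's: the same submodularity-plus-Menger squeeze for part~(1), and the same instantiations $X=M_u$, $Y=M_v$ (respectively $X=T_u$, $Y=T_v$) together with maximality of $M_v$ (respectively minimality of $T_u$) for parts~(2) and~(3).
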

\begin{proof}
  Let $k=\varrho(X)$ and $\ell=\varrho(Y)$. We get
  $k+\ell=\varrho(X)+\varrho(Y)\ge\varrho(X\cup Y)+\varrho(X\cap Y)\ge
  k+\ell$ from submodularity using $v\in X\cup Y$ and $u\in X\cap Y$
  which immediately proves claim \emph{(1)}. Then both claims
  \emph{(2)} and \emph{(3)} are implied by \emph{(1)}. In case
  of \emph{(2)}, apply \emph{(1)} with $X=M_u$ and $Y=M_v$; then
  $M_u\subseteq M_v$ is implied by the $v$-tightness of $X\cup Y$ and the
  maximality of $M_v$. Analogously, to prove \emph{(3)} apply
  \emph{(1)} with $X=T_u$ and $Y=T_v$; then
  $T_u\subseteq T_v$ is implied by the $u$-tightness of $X\cap Y$ and the
  minimality of $T_u$.
\end{proof}

The following lemma is the main tool of \cite{Lovasz73} for proving
Theorem~\ref{thm:Lovasz}.

\begin{lem}\label{lem:T_v}
Let $D=(V+r,E)$ be a digraph with root node $r$ and $e=(u,v)$ an edge
of $D$. Then the following are equivalent:
\begin{enumerate}
 \item[(1)] $T_v$ is not entered by $e$ (that is, $u\in T_v$).
 \item[(2)] No $w$-tight set is entered by $e$ for any $w\in V$
    (that is, the deletion of $e$ does not affect the value of
   $\lambda(r,w)$ for any $w\in V$).
\end{enumerate}
\end{lem}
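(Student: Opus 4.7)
The plan is to prove the two directions separately; the direction (2) $\Rightarrow$ (1) is essentially trivial, while the substantive content is (1) $\Rightarrow$ (2), which I expect to follow from Lemma~\ref{lem:trivsubmod}(1) combined with the defining minimality of $T_v$.

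For (2) $\Rightarrow$ (1), I would simply instantiate (2) at $w=v$: since $T_v$ is itself a $v$-tight set, hypothesis (2) says that $e$ does not enter it, and since $v\in T_v$ this forces $u\in T_v$, which is (1).

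For (1) $\Rightarrow$ (2), I would argue by contradiction. Assume $u\in T_v$, and suppose toward contradiction that some $w$-tight set $X$ is entered by $e$, so that $v\in X$ and $u\notin X$. The key step is to apply Lemma~\ref{lem:trivsubmod}(1) with $T_v$ playing the role of its ``$X$'' (as a $v$-tight set) and the present $X$ playing the role of its ``$Y$'' (as a $w$-tight set); the required hypothesis of that lemma becomes $v\in X$, which holds by assumption. The conclusion then yields that $T_v\cap X$ is again $v$-tight, so by the minimality of $T_v$ among $v$-tight sets we obtain $T_v\subseteq T_v\cap X\subseteq X$. Combined with $u\in T_v$, this gives $u\in X$, contradicting $u\notin X$.

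The only real obstacle in carrying this out is notational rather than mathematical: Lemma~\ref{lem:trivsubmod}(1) uses two vertex labels $u$ and $v$ whose names clash with the roles of $u$, $v$, $w$ in the current lemma, so one has to be explicit about which vertex plays which role in the invocation. Once that bookkeeping is handled, both implications are essentially one-line consequences of the standard submodular inequality together with the minimality characterization of $T_v$.
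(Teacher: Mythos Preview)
Your proposal is correct and essentially identical to the paper's proof: both directions are handled exactly as you describe, with the substantive implication $(1)\Rightarrow(2)$ obtained by applying Lemma~\ref{lem:trivsubmod}(1) to $T_v$ and the offending $w$-tight set to get a $v$-tight intersection, contradicting the minimality of $T_v$. The only cosmetic difference is that the paper phrases the contradiction as $T_v\cap Y\subsetneq T_v$ being $v$-tight, whereas you deduce $T_v\subseteq X$ and hence $u\in X$; these are the same observation.
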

\begin{proof}
  \emph{(2)} implies \emph{(1)} by the definition of $T_v$. To show
  that \emph{(1)} implies \emph{(2)}, let $u\in T_v$ and assume
  towards a contradiction that $e=(u,v)$ enters a $w$-tight set $Y$
  for some $w\in V$. Then since $v\in Y$, $Y$ is $w$-tight and $T_v$
  is $v$-tight, it follows from Lemma~\ref{lem:trivsubmod} that
  $T_v\cap Y$ is also $v$-tight. This contradicts the minimality of
  $T_v$ by $u\notin T_v\cap Y\subset T_v$.
\end{proof}

It may be worth noting that this lemma was already sufficient for
proving Theorem~\ref{thm:Lovasz} in \cite{Lovasz73}. Indeed, if a
spanning subgraph $F$ of minimum size preserving all local rooted
edge-connectivity values $\lambda_D(r,v)$ contained a vertex $v\in V$
with $\varrho_F(v)>\lambda_D(r,v)$ then an edge $e=(u,v)$ had to exist
with $u\in T^F_v$ by $\varrho_F(T^F_v)=\lambda_D(r,v)$, contradicting
the minimality of $F$ by Lemma~\ref{lem:T_v}. The following lemma was
proved in \cite{Joo21} and it happens to yield another simple proof of
Theorem~\ref{thm:Lovasz}. (Here we give a shorter proof of the lemma
than that of \cite{Joo21} that does not involve the engineering of
paths.)

\begin{lem}[\cite{Joo21}]\label{lem:flame+e}
  Assume that $F=(V+r,E)$ is a flame and $v\in V$ is arbitrary. If a
  new edge is added to $E$ that enters $M^F_v$ then the obtained
  digraph $F'$ is also a flame.
\end{lem}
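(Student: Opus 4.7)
The plan is to verify the flame condition vertex by vertex after adding the new edge $e=(x,y)$, where $y\in M^F_v$ and $x\notin M^F_v$. Since only the in-degree at $y$ changes, the only genuine work is to show that $\lambda_{F'}(r,y)=\varrho_F(y)+1$.

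First I would dispose of the vertices $w\neq y$. For these, $\varrho_{F'}(w)=\varrho_F(w)$, and since adding an edge cannot decrease local edge-connectivity, $\lambda_{F'}(r,w)\ge\lambda_F(r,w)=\varrho_F(w)=\varrho_{F'}(w)$; the reverse inequality $\lambda_{F'}(r,w)\le\varrho_{F'}(w)$ is trivial. So the flame identity holds at all such $w$.

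Next I would tackle $w=y$. The inequality $\lambda_{F'}(r,y)\le\varrho_{F'}(y)=\varrho_F(y)+1$ is automatic, so by Menger's theorem it suffices to prove that every set $X\subseteq V$ with $y\in X$ satisfies $\varrho_{F'}(X)\ge\varrho_F(y)+1$. I would split into two cases. If $X$ is \emph{not} $y$-tight in $F$, then by definition of $\lambda_F(r,y)=\varrho_F(y)$ (using that $F$ is a flame) we have $\varrho_F(X)\ge\varrho_F(y)+1$, so $\varrho_{F'}(X)\ge\varrho_F(X)\ge\varrho_F(y)+1$ and we are done regardless of whether $e$ enters $X$.

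The interesting case is when $X$ is $y$-tight in $F$, and this is where the hypothesis $e$ enters $M^F_v$ is used; I expect this to be the only delicate step. Since $X$ is $y$-tight, $X\subseteq M^F_y$ by maximality of $M^F_y$. Now $y\in M^F_v$, so Lemma~\ref{lem:trivsubmod}\emph{(2)} gives $M^F_y\subseteq M^F_v$, and hence $X\subseteq M^F_v$. Because $e$ enters $M^F_v$, its tail $x$ lies outside $M^F_v$ and therefore outside $X$, while its head $y$ lies in $X$; thus $e$ enters $X$. Consequently $\varrho_{F'}(X)=\varrho_F(X)+1=\varrho_F(y)+1$, completing the Menger-type check and hence the proof that $F'$ is a flame.
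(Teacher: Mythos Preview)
Your proof is correct and follows essentially the same approach as the paper: both reduce to showing $\lambda_{F'}(r,y)=\lambda_F(r,y)+1$, invoke Lemma~\ref{lem:trivsubmod}\emph{(2)} to get $M^F_y\subseteq M^F_v$, and conclude that $e$ enters every $y$-tight set so Menger's theorem applies. Your write-up is simply more explicit in separating the tight and non-tight cases.
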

\begin{proof}
  Assume that the newly added edge is $e=(u,w)$ with $u\notin M^F_v$
  and $w\in M^F_v$. Since the addition of $e$ obviously can not
  decrease the value of $\lambda(r,x)$ for any vertex $x$, nor does it
  affect the in-degree of any other vertex than $w$, it only remains
  to be shown that $\lambda_{F'}(r,w)=\lambda_F(r,w)+1$. This readily
  follows from claim \emph{(2)} of Lemma~\ref{lem:trivsubmod}:
  $M^F_w\subseteq M^F_v$ by $w\in M^F_v$, which implies that the
  addition of $e$ increases the in-degree of every $w$-tight set by
  $u\notin M^F_v$ and the definition of $M^F_w$. Hence
  $\lambda_{F'}(r,w)=\lambda_F(r,w)+1$ by Menger's theorem as claimed.
\end{proof}

As mentioned above and as observed in \cite{Joo21}, this lemma also
implies Theorem~\ref{thm:Lovasz}. Indeed, if a subflame $F$ of $D$ of
maximum size contained a vertex $v\in V$ with
$\lambda_F(r,v)<\lambda_D(r,v)$ then an edge of $D$ not in $F$ would
obviously have to enter $M^F_v$ contradicting the maximality of $F$ by
Lemma~\ref{lem:flame+e}. Besides that, this lemma also implies the
following theorem. Recall that a set system $\mathcal{F}\subseteq 2^S$
with $\emptyset\in\mathcal{F}$ is called a \emph{greedoid} on the
(finite) ground set $S$ if it fulfills the augmentation property of
matroids: $X,Y\in\mathcal{F}$, $|X|<|Y|$ imply the existence of an
$y\in Y\setminus X$ such that $X+y\in\mathcal{F}$. In other words, the
definition of a greedoid is obtained from that of a matroid by
relaxing subclusiveness, meaning that subsets of members of
$\mathcal{F}$ need not necessarily be in $\mathcal{F}$. (Nothing
beyond its definition will be relied on about greedoids in this paper,
but readers interested in the topic are referred to \cite{KLS} or
more recently \cite{Szeszler21}.)

\begin{thm}[\cite{Joo21}]\label{thm:flamegreedoid}
  Let $D=(V+r,E)$ be a digraph with root node $r$. Then the subflames
  $F$ of $D$ (viewed as edge sets $F\subseteq E$) form a greedoid on
  the ground set $E$.
\end{thm}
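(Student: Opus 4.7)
The plan is to verify directly the two defining properties of a greedoid. That $\emptyset\in\mathcal{F}$ is immediate: the empty spanning subgraph satisfies $\varrho_F(v)=\lambda_F(r,v)=0$ for every $v\in V$, so it is trivially a flame.

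For the augmentation property, let $X,Y\subseteq E$ be two subflames with $|X|<|Y|$; I would aim to produce an edge $e\in Y\setminus X$ such that $X+e$ is a flame by invoking Lemma~\ref{lem:flame+e}. To this end, the key step is to find some $v\in V$ such that $\varrho_Y(v)>\varrho_X(v)$. Once such a $v$ is in hand, the flame property of $Y$ gives $\lambda_Y(r,v)=\varrho_Y(v)>\varrho_X(v)\ge\lambda_X(r,v)$, and applying Menger's theorem to the set $M^X_v$ (which contains $v$) yields
\[
  \varrho_Y(M^X_v)\;\ge\;\lambda_Y(r,v)\;>\;\lambda_X(r,v)\;=\;\varrho_X(M^X_v),
\]
where the last equality uses the $v$-tightness of $M^X_v$ in $X$. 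Hence strictly more edges of $Y$ than of $X$ enter $M^X_v$, producing an $e\in Y\setminus X$ that enters $M^X_v$, and Lemma~\ref{lem:flame+e} immediately certifies that $X+e$ is a flame.

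The one subtle point is ensuring that a vertex $v\in V$ with $\varrho_Y(v)>\varrho_X(v)$ actually exists. The obstacle is the degenerate possibility that all of the extra edges in $Y$ live at the root: if every $e\in Y\setminus X$ has head $r$, the pigeonhole over $V$ fails. I would dispose of this separately by noting that any edge $e$ with head $r$ can be appended to any flame $F$ while preserving flameness (the in-degrees of vertices in $V$ do not change, and $\lambda_F(r,w)$ cannot increase since $r$ lies on the source side of every $(r,w)$-cut), so such an $e$ is an acceptable augmentation. If, on the other hand, every edge of $Y\setminus X$ has its head in $V$, a short count gives
\[
  \sum_{v\in V}\bigl(\varrho_Y(v)-\varrho_X(v)\bigr)\;\ge\;|Y\setminus X|-|X\setminus Y|\;=\;|Y|-|X|\;>\;0,
\]
so some $v\in V$ with $\varrho_Y(v)>\varrho_X(v)$ exists and the argument of the previous paragraph applies. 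I expect the root-edge case to be the only genuinely delicate step, since it is not covered by Lemma~\ref{lem:flame+e} as stated but must be checked by hand.
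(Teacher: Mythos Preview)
Your proof is correct and follows essentially the same route as the paper's: find a vertex $v$ with $\varrho_Y(v)>\varrho_X(v)$, use $M^X_v$ to locate an edge $e\in Y\setminus X$ entering it, and invoke Lemma~\ref{lem:flame+e}. The paper simply asserts that $|F_1|<|F_2|$ yields such a $v\in V$ without comment; your explicit treatment of the case where edges of $Y\setminus X$ may enter $r$ is in fact more careful than the paper's own argument.
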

\begin{proof}
  Since it is obvious that $\emptyset$ is a flame, it remains to be
  shown that for any two subflames $F_1,F_2$ of $D$ with $|F_1|<|F_2|$
  there exists an $e\in F_2\setminus F_1$ such that $F_1+e$ is also a
  flame. $|F_1|<|F_2|$ implies the existence of a $v\in V$ for which
  $\lambda_{F_2}(r,v)=\varrho_{F_2}(v)>\varrho_{F_1}(v)=\lambda_{F_1}(r,v)$. Then
  an edge $e\in F_2\setminus F_1$ must enter the set $X=M^{F_1}_v$ since
  $\varrho_{F_2}(X)\ge\lambda_{F_2}(r,v)>\lambda_{F_1}(r,v)=\varrho_{F_1}(X)$. Then
  $F_1+e$ is indeed a flame by Lemma~\ref{lem:flame+e}.
\end{proof}

We remark that subfalmes of $D$ are not closed under taking subsets
(as shown by trivial examples) so they do not form a matroid in
general.

Finally, we will rely on an equivalent definition of gammoids. Recall
that gammoids form a class of matroids typically defined on a subset
of nodes of a directed graph in terms of vertex-disjoint paths, see
\cite[Section 39.4]{Schrijver} for the details. In this paper,
however, we will use a lesser known, equivalent definition of
gammoids. Assume that a directed graph $D=(V+r,E)$ with a root node
$r$ is given along with a vertex $v\in V$. Then the ground set of the
matroid $\mathcal{G}_D(v)=(S,\mathcal{F}_v)$ is
$S=\operatorname{in}_D(v)$, the set of edges entering $v$ and a subset
$X\subseteq S$ belongs to $\mathcal{F}_v$ if and only if there exist
$|X|$ edge-disjoint paths from $r$ to $v$ such that the last edges of
these paths form $X$. I failed in tracing down the original source of
the equivalency of the two definitions of gammoids, but its proof is
simple (see \cite{Fortier17}). On the other hand, for the sake of
self-containedness we give a short proof of the fact that
$\mathcal{G}_D(v)$ is a matroid as this also follows easily from the
above.

\begin{lem}\label{lem:gammoid}
  The above defined
  $\mathcal{G}_D(v)=(\operatorname{in}_D(v),\mathcal{F}_v)$ is a
  matroid.
\end{lem}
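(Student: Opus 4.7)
The plan is to verify the three matroid axioms directly. Taking zero paths witnesses $\emptyset \in \mathcal{F}_v$, and restricting a given system of $|X|$ edge-disjoint $r$-to-$v$ paths to the sub-family whose last edges lie in $X' \subseteq X$ establishes downward closure. The heart of the argument is therefore the augmentation property, which I would attack via a max-flow reformulation.

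Split $v$ to form an auxiliary digraph $\tilde D$: remove $v$ together with its outgoing arcs, and for each $e = (u, v) \in \operatorname{in}_D(v)$ introduce a fresh terminal $v_e$ together with the arc $(u, v_e)$; finally adjoin a super-sink $t^*$ and a unit-capacity arc $(v_e, t^*)$ for every $e$, declaring every other arc of $\tilde D$ to have unit capacity as well. Because the only outgoing arc at each $v_e$ leads to $t^*$, a set of $|X|$ edge-disjoint $r$-to-$v$ paths in $D$ ending in distinct edges of $X$ corresponds bijectively to an integer $r$-to-$t^*$ flow in $\tilde D$ of value $|X|$ that saturates exactly the arcs $(v_e, t^*)$ with $e \in X$. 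Thus $X \in \mathcal{F}_v$ iff such a flow exists.

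Given $X, Y \in \mathcal{F}_v$ with $|X| < |Y|$ witnessed by flows $\mathcal{P}$ and $\mathcal{Q}$, interpret $\mathcal{Q} - \mathcal{P}$ as a non-negative flow in the residual network $R(\mathcal{P})$; it has net value $|Y| - |X| > 0$ from $r$ to $t^*$ and conserves at every internal vertex. Standard flow decomposition then yields $|Y| - |X|$ simple augmenting $r$-to-$t^*$ paths in $R(\mathcal{P})$ (plus some cycles). Each such path enters $t^*$ via a forward arc $(v_{e^*}, t^*)$ that carries positive flow in $\mathcal{Q} - \mathcal{P}$, which forces $e^* \in Y \setminus X$. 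Augmenting $\mathcal{P}$ along one such simple path produces an integer flow of value $|X| + 1$ which, because the path enters $t^*$ only once and only via the forward arc $(v_{e^*}, t^*)$, uses no backward arc $(t^*, v_e)$ with $e \in X$ and therefore saturates exactly the arcs $(v_e, t^*)$ for $e \in X + e^*$; this witnesses $X + e^* \in \mathcal{F}_v$. The delicate step is precisely this final observation --- that the augmentation preserves all saturations already in $X$ --- which rests on the chosen augmenting path being simple and ending at $t^*$ via a forward arc, properties guaranteed by flow decomposition in a unit-capacity network.
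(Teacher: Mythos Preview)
Your argument is correct. The auxiliary split of $v$ into terminals $v_e$ with a super-sink $t^*$ gives a clean bijection between independent sets of $\mathcal{F}_v$ and integral $r$--$t^*$ flows with prescribed saturation pattern on the terminal arcs, and the residual-flow decomposition of $\mathcal{Q}-\mathcal{P}$ does produce a simple augmenting path whose final arc is $(v_{e^*},t^*)$ for some $e^*\in Y\setminus X$. Your observation that such a simple path cannot touch any other $v_e$ (since each $v_e$ with $e\notin X$ has $t^*$ as its unique successor in the residual graph, and each $v_e$ with $e\in X$ is reachable only from $t^*$) is exactly what guarantees that augmentation saturates precisely $X+e^*$.

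The paper takes a different and somewhat shorter route that stays inside $D$ and avoids the auxiliary construction. It deletes from $D$ all edges of $\operatorname{in}_D(v)$ outside $X$ to obtain $D_X$, so that $\lambda_{D_X}(r,v)=|X|$, and then looks at the unique largest $v$-tight set $Z=M^{D_X}_v$. Since $\varrho_{D_Y}(Z)\ge |Y|>|X|=\varrho_{D_X}(Z)$, some $e\in Y\setminus X$ enters $Z$; by maximality of $Z$ this $e$ enters every $v$-tight set of $D_X$, so Menger gives $\lambda_{D_X+e}(r,v)=|X|+1$, i.e.\ $X+e\in\mathcal{F}_v$. Thus the paper leverages the submodularity machinery (the sets $M_v$) already developed earlier, yielding a two-line augmentation argument, whereas your approach is fully self-contained and does not rely on tight sets at all---at the cost of the explicit flow bookkeeping. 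Both proofs are ultimately instances of the same max-flow/min-cut phenomenon, seen respectively from the cut side and the augmenting-path side.
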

\begin{proof}
  It is straightforward that $\emptyset\in\mathcal{F}_v$ and that
  $Y\subseteq X\in\mathcal{F}_v$ implies $Y\in\mathcal{F}_v$ so it is
  only the augmentation property of matroids that needs to be
  proved. So let $X,Y\in\mathcal{F}_v$ with $k=|X|<|Y|=\ell$ and
  define $D_X$ and $D_Y$ to be the digraphs obtained from $D$ by
  deleting all edges of $\operatorname{in}_D(v)\setminus X$ and
  $\operatorname{in}_D(v)\setminus Y$, respectively from
  $D$. Obviously, $\lambda_{D_X}(r,v)=k$ and $\lambda_{D_Y}(r,v)=\ell$
  by the definition of $\mathcal{G}_D(v)$, the former of which implies
  $\varrho_{D_X}(Z)=k$ for $Z=M^{D_X}_v$. Then there must exist an
  edge $e\in Y\setminus X$ entering $Z$ by
  $k<\ell=\lambda_{D_Y}(r,v)\le\varrho_{D_Y}(Z)$. Since $e$ enters all
  $v$-tight sets of $D_X$ by the definition of $Z=M^{D_X}_v$, there
  exist $k+1$ edge-disjoint paths from $r$ to $v$ in $D_X+e$ by
  Menger's theorem. Hence $X+e\in\mathcal{F}_v$ which concludes the
  proof.
\end{proof}

\section{Minimum Weight Maximal Flames in Acyclic
  Digraphs}\label{sect:mwmf}

Assume that a directed graph $D=(V+r,E)$ with root node $r$ and a
weight function $w:E\rightarrow\mathbb{Q}_+$ are given. We consider
the problem of finding a maximal flame $F$ in $D$ with minimum total
weight of its edges.

Note that by Theorem~\ref{thm:Lovasz} this problem is equivalent to
that of finding a spanning subgraph of minimum total weight that
preserves all rooted local edge connectivity values $\lambda(r,v)$. We
also remark that this problem is a far-reaching generalization of the
classic problem of finding a minimum weight arborescence rooted at $r$
in $D$ (see \cite[Section 52.1]{Schrijver}); indeed, if a new root $R$
is added to the graph along with a single edge from $R$ to $r$ then
maximal flames of the new graph (with respect to the root $R$)
correspond to arborescences of $D$ rooted at $r$.

While the minimum weight maximal flame problem seems to be a very
natural one, I failed in finding any mention of it until very recently
in \cite{Joo21} where it was raised and mentioned that its complexity
seems to be open. As it was noted in Section~\ref{sect:intro} (and
also in \cite{Joo21}), it is known to be solvable in polynomial time
in the special case where the $\lambda(r,v)$ values are all equal. In
this section we aim at settling the problem for acyclic digraphs.

Let $F$ be a maximal flame in the directed graph $D=(V+r,E)$ and $v\in
V$ an arbitrary vertex. Then, by Theorem~\ref{thm:Lovasz},
$F\cap\operatorname{in}_D(v)$ is a base of the matroid
$\mathcal{G}_D(v)$ defined above before
Lemma~\ref{lem:gammoid}. Consequently, $F$ is a base of
$\mathcal{M}=\cup_{v\in V}\mathcal{G}_D(v)$, the direct sum (or
disjoint union) of the matroids $\mathcal{G}_D(v)$. As one can expect,
the converse is not true, a base of $\mathcal{M}$ is not necessarily a
maximal flame. An example for this is shown in Figure~\ref{fig:fig1}:
it is easy to check that deleting the dashed edges from the directed
graph of the figure yields a base of $\mathcal{M}$ that is not a
flame. On the other hand, we will show below that maximal flames
coincide with bases of $\mathcal{M}$ in acyclic digraphs. This will
follow easily from the following theorem.

\begin{figure}
\centering
\includegraphics{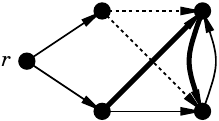}
\caption{A base of $\cup_{v\in V}\mathcal{G}_D(v)$ is not necessarily a flame.}
\label{fig:fig1}
\end{figure}

\begin{thm}\label{thm:sinksaw}
  Assume that in the directed graph $D=(V+r,E)$ with root node $r$ the
  vertex $t\in V$ is a sink (that is, it is not left by any edge).
  Then $F\subseteq E$ is a maximal flame in $D$ if and only if $F$ is
  the disjoint union of a maximal flame in $D-t$ and a base of
  $\mathcal{G}_D(t)$.
\end{thm}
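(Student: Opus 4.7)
The plan is to prove the two directions separately. For the \emph{only if} direction, I would set $F_1 := F \cap E(D-t)$ and $F_2 := F \cap \operatorname{in}_D(t)$; since $t$ is a sink, this splits $F$ disjointly and every $r$-to-$v$ path in $D$ (or in $F$) with $v \ne t$ avoids $t$ altogether. This immediately gives $\lambda_{F_1}(r,v)=\lambda_F(r,v)=\lambda_D(r,v)=\lambda_{D-t}(r,v)$ and $\varrho_{F_1}(v)=\varrho_F(v)$ for every $v\in V\setminus\{t\}$, so $F_1$ is a maximal flame of $D-t$. The flame property at $t$ together with maximality yields $|F_2|=\varrho_F(t)=\lambda_F(r,t)=\lambda_D(r,t)$ pairwise edge-disjoint $r$-$t$ paths in $F\subseteq D$ whose last edges are exactly the elements of $F_2$, so $F_2\in\mathcal{F}_t$ with maximum cardinality, i.e.\ a base of $\mathcal{G}_D(t)$.

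For the \emph{if} direction, set $F:=F_1\cup F_2$; the identity $|F|=\sum_{v\ne t}\lambda_D(r,v)+\lambda_D(r,t)=\sum_{v\in V}\lambda_D(r,v)$ reduces the task to proving that $F$ is a flame, because then it is automatically maximal. The flame property at $v\ne t$ is immediate from the sink property of $t$, exactly as in the forward direction. The real work is at $v=t$: show $\lambda_F(r,t)=|F_2|$. For an arbitrary candidate $r$-$t$ cut of the form $X+t$ with $X\subseteq V\setminus\{t\}$, the in-degree splits as $\varrho_F(X+t)=\varrho_{F_1}(X)+|F_2|-\mu_X$, where $\mu_X:=|\{e\in F_2:\text{tail of }e\in X\}|$. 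Thus $\lambda_F(r,t)\ge |F_2|$ reduces to the inequality $\varrho_{F_1}(X)\ge\mu_X$ for every such $X$. The $|F_2|$ edge-disjoint $r$-$t$ paths witnessing $F_2\in\mathcal{F}_t$ restrict (by dropping their last edges) to $\mu_X$ edge-disjoint $r$-to-$X$ paths in $D-t$, so Menger's theorem gives $\lambda_{D-t}(r,X)\ge\mu_X$.

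The remaining and main technical step is an auxiliary claim: a maximal flame $F_1$ of $D-t$ preserves rooted local edge-connectivity not only to every vertex but to every \emph{set}, i.e.\ $\lambda_{F_1}(r,X)=\lambda_{D-t}(r,X)$ for all $X\subseteq V\setminus\{t\}$ with $r\notin X$. This will close the argument via $\varrho_{F_1}(X)\ge \lambda_{F_1}(r,X)=\lambda_{D-t}(r,X)\ge\mu_X$. I expect this to be the main obstacle, because $\varrho_{F_1}$ and $\varrho_{D-t}$ can genuinely differ on non-tight sets, so monotonicity alone is insufficient. My approach is to remove the edges of $(D-t)\setminus F_1$ from $D-t$ one by one and show that no removal changes $\lambda(r,X)$ for any $X$. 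For $e=(u,v)\in(D-t)\setminus F_1$, the chain $\varrho_{F_1}(T^{D-t}_v)\ge\lambda_{F_1}(r,v)=\lambda_{D-t}(r,v)=\varrho_{D-t}(T^{D-t}_v)\ge\varrho_{F_1}(T^{D-t}_v)$ collapses to equalities thanks to $F_1$ being a maximal flame, so every $D-t$ edge entering $T^{D-t}_v$ already lies in $F_1$ and consequently $u\in T^{D-t}_v$.

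It then remains to extend Lemma~\ref{lem:T_v} from $w$-tight sets to $X$-tight sets: if $u\in T^{D-t}_v$ then $e$ does not enter any set $Y\supseteq X$ with $r\notin Y$ and $\varrho_{D-t}(Y)=\lambda_{D-t}(r,X)$. Assuming the contrary (so $v\in Y$, $u\notin Y$), I would apply the submodularity trick from Lemma~\ref{lem:trivsubmod} to $Y$ and $T^{D-t}_v$: the obvious bounds $\varrho(Y\cup T^{D-t}_v)\ge\lambda_{D-t}(r,X)$ (since the union still separates $r$ from $X$) and $\varrho(Y\cap T^{D-t}_v)\ge\lambda_{D-t}(r,v)$ (since $v$ lies in the intersection) force equality throughout, so $Y\cap T^{D-t}_v$ is $v$-tight, the minimality of $T^{D-t}_v$ yields $T^{D-t}_v\subseteq Y$, and hence $u\in Y$, a contradiction. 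Iterating this removability across all edges of $(D-t)\setminus F_1$ proves the auxiliary claim and completes the theorem.
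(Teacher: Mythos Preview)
Your proof is correct, and the ``only if'' direction matches the paper's almost verbatim. The ``if'' direction, however, takes a genuinely different route. The paper avoids any cut-counting: it takes the $|B|$ edge-disjoint $r$--$t$ paths realizing the base $B$, observes that their edge set $F_1$ is itself a flame, and then applies the greedoid augmentation property of Theorem~\ref{thm:flamegreedoid} twice --- first to enlarge $F_1$ to a maximal flame $F_2$ with $F_2\cap\operatorname{in}_D(t)=B$, then to enlarge $F_0$ using edges of $F_2$, arguing that the only edges ever added in this second pass must lie in $B$. Your argument instead proves directly that $\lambda_F(r,t)=|F_2|$ by decomposing the in-degree of an arbitrary $r$--$t$ cut and reducing to the inequality $\varrho_{F_1}(X)\ge\mu_X$, which you obtain via the auxiliary fact that a maximal flame preserves $\lambda(r,X)$ for every vertex \emph{set} $X$, not just singletons.

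Your approach is more elementary in that it bypasses the greedoid machinery entirely, and the auxiliary set-connectivity preservation claim is of independent interest. The paper's approach, on the other hand, is shorter once Theorem~\ref{thm:flamegreedoid} is in hand and exhibits the maximal flame structure more explicitly. One point in your write-up deserves tightening: in the iterative removal, after deleting the first edge $e_1$ the minimum $v$-tight set may change, so the displayed chain and the submodularity contradiction should be run in the current intermediate graph $G_i$ (using $T^{G_i}_v$) rather than always in $D-t$. This is harmless because each removal preserves all vertex-connectivities, so $F_1$ remains a maximal flame of $G_i$ and the same chain of equalities holds there; but as written the notation $T^{D-t}_v$ is strictly correct only for the first step.
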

\begin{proof}
  The prove the ``only if'' direction let $F$ be a maximal flame in
  $D$. Since $F\cap\operatorname{in}_D(t)$ is obviously a base of
  $\mathcal{G}_D(t)$, it only remains to be shown that
  $F_0=F\setminus\operatorname{in}_D(t)$ is a maximal flame in
  $D-t$. Obviously, $\lambda_{D-t}(r,v)=\lambda_D(r,v)$ and
  $\lambda_{F_0}(r,v)=\lambda_F(r,v)$ hold for every $v\in V-t$ since
  no path from $r$ to $v$ can pass through the sink $t$. Furthermore,
  $\varrho_{F_0}(v)=\varrho_F(v)$ and
  $\lambda_D(r,v)=\lambda_F(r,v)=\varrho_F(v)$ are also obvious for
  every $v\in V-t$, the latter because $F$ is a maximal flame in
  $D$. All these together show
  $\lambda_{D-t}(r,v)=\lambda_{F_0}(r,v)=\varrho_{F_0}(v)$ for every
  $v\in V-t$ which shows that $F_0=F\setminus\operatorname{in}_D(t)$
  is indeed a maximal flame in $D-t$.

  Turning to the proof of the ``if'' direction, let $F=F_0\cup B$
  where $F_0$ is a maximal flame in $D-t$ and $B$ is a base of
  $\mathcal{G}_D(t)$. Choose $|B|=\lambda_D(r,t)$ edge-disjoint paths
  from $r$ to $t$ in $D$ such that the set of their last edges is $B$
  and let $F_1$ denote the union of the edge-sets of these paths. Then
  $F_1$ is obviously a flame by definition. By
  Theorem~\ref{thm:flamegreedoid} this implies the existence of a
  maximal flame $F_2$ in $D$ such that $F_1\subseteq F_2$; indeed,
  $F_2$ can be obtained by choosing an arbitrary maximal flame in $D$
  and using it to repeatedly apply the augmentation property of
  greedoids starting out from $F_1$. Obviously, since
  $|B|=\lambda_D(r,t)=\varrho_{F_1}(t)$ and therefore no edge of
  $\operatorname{in}_D(t)$ was added during the repeated
  augmentations, $F_2\cap\operatorname{in}_D(t)=B$.

  Since $F_0$ is also a flame in $D$, we can again apply the
  augmentation property of greedoids repeatedly starting out from
  $F_0$ and always choosing the next augmenting edge from $F_2$ until
  a maximal flame $F_3$ is obtained.  We claim that $F_3=F$. Since
  $F_0$ is a maximal flame in $D-t$,
  $\lambda_D(r,v)=\lambda_{F_0}(r,v)=\varrho_{F_0}(v)$ holds for every
  $v\in V-t$ (where we again used
  $\lambda_{D-t}(r,v)=\lambda_D(r,v)$). This implies that no edge of
  $D-t$ was added when $F_3$ was obtained from $F_0$ and hence
  $F_3\setminus\operatorname{in}_D(t)=F_0$. On the other hand, since
  all augmenting edges came from $F_2$ and
  $F_2\cap\operatorname{in}_D(t)=B$, this means that $F_3$ was
  obtained from $F_0$ simply by adding all edges of $B$ one by
  one. This indeed shows $F_3=F$ and thus settles the proof.
\end{proof}

\begin{cor}\label{cor:dagmaxflame}
  Assume that $D=(V+r,E)$ is an acyclic directed graph with root node
  $r$. Then $F\subseteq E$ is a maximal flame in $D$ if and only if
  $F$ is a base of the matroid $\mathcal{M}=\cup_{v\in
    V}\mathcal{G}_D(v)$.
\end{cor}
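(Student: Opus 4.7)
The plan is to prove the corollary by induction on $|V|$, using Theorem~\ref{thm:sinksaw} as the engine of the inductive step. The base case $V=\emptyset$ is vacuous: both sides agree with $F=\emptyset$. Since $D$ is acyclic, it has at least one sink $t\in V$, which I will peel off in the inductive step.

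For the inductive step, I would first observe that because $t$ is a sink, no edge of $D$ leaves $t$. Therefore for every $v\in V-t$ we have $\operatorname{in}_D(v)=\operatorname{in}_{D-t}(v)$, and no $r$--$v$ path in $D$ uses $t$; hence the $r$--$v$ paths in $D$ are exactly the $r$--$v$ paths in $D-t$. From the definition of $\mathcal{G}_D(v)$ it follows that $\mathcal{G}_D(v)=\mathcal{G}_{D-t}(v)$ for every $v\in V-t$. In particular, the matroid $\mathcal{M}=\cup_{v\in V}\mathcal{G}_D(v)$ splits as the direct sum of $\mathcal{M}'=\cup_{v\in V-t}\mathcal{G}_{D-t}(v)$ and $\mathcal{G}_D(t)$ on the disjoint ground sets $E\setminus\operatorname{in}_D(t)$ and $\operatorname{in}_D(t)$, so $F\subseteq E$ is a base of $\mathcal{M}$ if and only if $F=F_0\cup B$ with $F_0$ a base of $\mathcal{M}'$ and $B$ a base of $\mathcal{G}_D(t)$.

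Now I apply Theorem~\ref{thm:sinksaw}: $F$ is a maximal flame in $D$ if and only if $F=F_0\cup B$ (disjoint union) with $F_0$ a maximal flame in $D-t$ and $B$ a base of $\mathcal{G}_D(t)$. By the induction hypothesis applied to the acyclic digraph $D-t$, being a maximal flame in $D-t$ is equivalent to being a base of $\mathcal{M}'$. Chaining these equivalences yields exactly the claimed characterization of maximal flames of $D$ as bases of $\mathcal{M}$.

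I do not foresee a genuine obstacle; the only point that requires care is the identification $\mathcal{G}_D(v)=\mathcal{G}_{D-t}(v)$ for $v\neq t$, which is what makes the sink choice crucial and which is what fails in general digraphs (edges out of $t$ could participate in $r$--$v$ paths for other $v$). Everything else is a clean bookkeeping argument using the direct-sum structure of $\mathcal{M}$ and Theorem~\ref{thm:sinksaw}.
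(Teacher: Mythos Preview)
Your proposal is correct and follows essentially the same approach as the paper: induct on $|V|$, peel off a sink $t$, use that $\mathcal{G}_D(v)=\mathcal{G}_{D-t}(v)$ for $v\neq t$, and invoke Theorem~\ref{thm:sinksaw} together with the induction hypothesis. The only cosmetic difference is that the paper treats the ``only if'' direction as obvious (it holds for all digraphs) and proves just the ``if'' direction, whereas you package both directions into a single chain of equivalences via the direct-sum decomposition of $\mathcal{M}$.
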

\begin{proof}
  As mentioned above, the ``only if'' direction is obvious (and it
  holds for all directed graphs) so it is only the ``if'' direction
  that needs to be proved. So let $F$ be a base of
  $\mathcal{M}$. Since $D$ is acyclic, it contains a sink
  $t$. Obviously, $\mathcal{G}_{D-t}(v)=\mathcal{G}_D(v)$ holds for
  every $v\in V-t$ since no path from $r$ to $v$ in $D$ can include
  $t$. Therefore by induction on the number of vertices,
  $F_0=F\setminus\operatorname{in}_D(t)$ is a maximal flame in $D-t$
  (since $D-t$ is acyclic and $F_0$ is a base of the matroid
  $\cup_{v\in V-t}\mathcal{G}_{D-t}(v)$). Hence $F$ is a maximal flame
  in $D$ by Theorem~\ref{thm:sinksaw}.
\end{proof}

The above corollary yields the following algorithmic result.

\begin{thm}
  Let $D=(V+r,E)$ be an acyclic directed graph with root node $r$ and
  $w:E\rightarrow\mathbb{Q}_+$ a weight function. Then a minimum
  weight maximal flame in $D$ can be found in $O\left(|E|^2\right)$
  time.
\end{thm}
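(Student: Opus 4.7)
The plan is to combine Corollary~\ref{cor:dagmaxflame} with the classical matroid greedy algorithm. By the corollary, the maximal flames in the acyclic digraph $D$ are exactly the bases of the direct sum matroid $\mathcal{M}=\cup_{v\in V}\mathcal{G}_D(v)$. Since every base of a direct sum of matroids is the disjoint union of a base from each summand, a minimum weight maximal flame is obtained by finding, for each $v\in V$ independently, a minimum weight base of the gammoid $\mathcal{G}_D(v)$, and then taking their (disjoint) union.

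For each $v\in V$ I would then run the standard matroid greedy algorithm on $\mathcal{G}_D(v)$: sort the edges of $\operatorname{in}_D(v)$ in order of increasing weight and scan them, adding the current edge $e$ to the set $F_v$ under construction whenever $F_v+e$ remains independent in $\mathcal{G}_D(v)$. The crucial subroutine is the independence oracle, which I would implement incrementally. Alongside $F_v$ I maintain a certificate: a family of $|F_v|$ edge-disjoint paths from $r$ to $v$ in $D$ whose last edges are precisely the elements of $F_v$. To test whether $F_v+e$ is independent for a candidate edge $e=(u,v)$, delete from $D$ the edges of $\operatorname{in}_D(v)\setminus(F_v+e)$ and look for an augmenting path in the residual digraph of the current flow. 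Since $e$ is the only forward in-edge of $v$ not already carrying flow, and any simple augmenting path visits the sink $v$ only at its end, such a path (if it exists) must enter $v$ along $e$; performing the augmentation therefore produces $|F_v|+1$ edge-disjoint paths whose last edges form $F_v+e$, which is exactly the certificate needed for the next iteration. If no augmenting path exists, $e$ is skipped.

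The complexity analysis is then immediate. Sorting the in-edges across all vertices costs $O(|E|\log|E|)$. Each of the $|E|$ candidate edges triggers exactly one augmenting-path search, which can be executed in $O(|V|+|E|)=O(|E|)$ time by a single BFS or DFS in the residual digraph. Hence the total running time is $O(|E|^2)$, as claimed. The main point that needs careful verification is precisely the one highlighted above, namely that a single augmenting-path search in the residual graph faithfully implements the gammoid independence oracle and extends the incremental certificate correctly, so that one never needs to recompute a flow from scratch.
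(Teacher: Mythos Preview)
Your proposal is correct and follows essentially the same route as the paper: reduce via Corollary~\ref{cor:dagmaxflame} to finding a minimum-weight base of each $\mathcal{G}_D(v)$, run the matroid greedy algorithm on the in-edges of $v$ in weight order, and implement the independence oracle by a single augmenting-path search in the residual graph of an incrementally maintained flow, yielding $O(|E|)$ per candidate edge and $O(|E|^2)$ overall. Your explicit justification that in the residual of $D_{F_v+e}$ the vertex $v$ has $e$ as its unique in-edge (so any simple augmenting path ends with $e$ and leaves the flow on $F_v$ intact) is exactly the point the paper uses but states more tersely.
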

\begin{proof}
  By Corollary~\ref{cor:dagmaxflame} a minimum weight base of the
  matroid $\mathcal{M}=\cup_{v\in V}\mathcal{G}_D(v)$ is to be found.
  In other words, we need to find a minimum weight base of the matroid
  $\mathcal{G}_D(v)=(\operatorname{in}_D(v),\mathcal{F}_v)$ for every
  $v\in V$.

  While this problem could be viewed as a minimum cost integer flow
  problem (which would in itself imply strongly polynomial time
  solvability), we get a better running time by implementing the
  greedy algorithm for matroids (see \cite[Section
    40.1]{Schrijver}). This means that we maintain a set
  $X\in\mathcal{F}_v$ starting from $X=\emptyset$. Then, proceeding in
  increasing order of the weights of the edges in
  $\operatorname{in}_D(v)$, we decide for every edge $e$ if
  $X+e\in\mathcal{F}_v$ holds and we replace $X$ by $X+e$ if it does.

  In order for the decisions on $X+e$ to be made, we also maintain
  along with $X$ a binary valued flow $f:E\rightarrow\{0,1\}$ of
  overall value $|X|$ from $r$ to $v$ such that $e\in X$ holds for
  every $e\in\operatorname{in}_D(v)$ if and only if $f(e)=1$. Then we
  use the special case of the maximum flow algorithm of Ford and
  Fulkerson for finding edge-disjoint paths (see \cite[Section
    9.2]{Schrijver}) to decide if $|X|+1$ edge-disjoint paths exist
  from $r$ to $v$ in the graph $D_{X+e}$ obtained from $D$ by deleting
  all edges of $\operatorname{in}_D(v)$ not in $X+e$. In particular,
  $X+e\in\mathcal{F}_v$ holds if and only if there exists a directed
  path from $r$ to $v$ in the graph obtained from $D_{X+e}$ by
  reversing all edges of $D$ with $f(e)=1$. Since this can be decided
  using breadth first search in $O\left(|E|\right)$ time, this imples
  that a base of $\mathcal{G}_D(v)$ can be found in
  $O\left(\varrho_D(v\right)|E|)$ time. (Obviously, this running time
  is also amply sufficient for sorting the edges of
  $\operatorname{in}_D(v)$ by increasing weight.)

  Consequently, a base of $\mathcal{M}=\cup_{v\in V}\mathcal{G}_D(v)$
  (and thus a minimum weight maximal flame in $D$) can be found in
  $O\left(|E|\cdot\sum_{v\in
    V}\varrho_D(v)\right)=O\left(|E|^2\right)$ time.
\end{proof}

\section{Decomposition of flames into edge-disjoint
   branchings}\label{sect:decomp}

Recall that, given a directed graph $D=(V,E)$, a \emph{branching} is a
subset of the edges $B\subseteq E$ such that disregarding the
directions of the edges $B$ is a forest (that is, it contains no
undirected cycle) and every $v\in V$ is entered by at most one edge of
$B$. A vertex $v\in V$ is a \emph{head} of $B$ if it is entered by a
(single) edge of $B$ and $v$ is a \emph{root} of $B$ if it is not
entered by and edge of $B$, but it is left by at least one edge of
$B$. That is, isolated vertices of the graph $(V,B)$ are not
considered as roots of $B$ (at which point we slightly differ in
terminology from \cite{Schrijver}). Therefore every weak component of
the graph $(V,B)$ is either an isolated vertex or it has a single root
in it. The set of roots and set of heads of $B$ are referred to as the
\emph{root-set} and the \emph{head-set} of $B$, respectively. The
branching $B$ is an \emph{arborescence} if the graph $(V,B)$ is weakly
connected (and thus $B$ has a single root). The following classic
theorem is due to Edmonds from 1973 \cite[Corollary 53.1b]{Schrijver}.

\begin{thm}[Edmonds' disjoint arborescences theorem]
  \label{thm:Edmonds}
  The directed graph $D=(V+r,E)$ with root node $r$ contains $k$
  edge-disjoint arborescences rooted at $r$ if and only if
  $\varrho_D(X)\ge k$ for every $X\subseteq V$.
\end{thm}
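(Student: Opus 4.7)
My plan is to handle necessity directly and establish sufficiency by induction on $k$.

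The necessity direction is immediate: for any non-empty $X\subseteq V$ and any vertex $v\in X$, each of the $k$ edge-disjoint arborescences rooted at $r$ contains a directed $r$-$v$ path, and these paths contribute $k$ pairwise distinct edges to $\varrho_D(X)$.

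For sufficiency I would proceed by induction on $k$. The base case $k=0$ is vacuous; for $k\ge 1$ it suffices to extract a single spanning arborescence $B$ of $D$ rooted at $r$ such that the reduced digraph $D-B$ still satisfies $\varrho_{D-B}(X)\ge k-1$ for every non-empty $X\subseteq V$. Applying the inductive hypothesis to $D-B$ then delivers $k-1$ further edge-disjoint arborescences, which together with $B$ give the required $k$.

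To build $B$, I would grow it one edge at a time, starting from $B=\emptyset$ and maintaining two invariants: $B$ is a branching whose only possible root is $r$ (so its vertex set is $U+r$, where $U$ is its head-set, and it restricts to an arborescence on $U+r$); and $\varrho_{D-B}(X)\ge k-1$ for every non-empty $X\subseteq V$. When $U=V$ the branching $B$ is the spanning arborescence we seek. Otherwise I would select an edge $e=(u,v)\in E\setminus B$ with $u\in U+r$ and $v\in V\setminus U$ such that both invariants survive the replacement $B\mapsto B+e$, and iterate.

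The main obstacle, and the technical heart of the proof, is the extendability lemma: while $U\subsetneq V$, such an augmenting edge $e$ always exists. I would prove it by uncrossing, calling $X\subseteq V$ \emph{tight} if $\varrho_{D-B}(X)=k-1$. By submodularity the non-empty tight sets are closed under intersection and union. The key quantitative handle is that every edge of $B$ has its head in $U$, so $B$ contributes nothing to $\varrho(V\setminus U)$ and in general only contributes edges entering a set $X$ via heads in $X\cap U$; hence $\varrho_{D-B}(V\setminus U)=\varrho_D(V\setminus U)\ge k$, providing at least $k$ candidate edges. Assuming towards a contradiction that each candidate edge $e$ enters some tight blocker set $X_e$, I would uncross these blockers — passing, depending on convenience, to the union $Y$ of the $X_e$ or to the minimal tight set containing a chosen $v\in V\setminus U$ — and then compare the count $\varrho_D(Y)\ge k$ against $\varrho_{D-B}(Y)=k-1$ augmented by the $B$-edges into $Y\cap U$. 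Exploiting that $B$ is a branching (so each head in $Y\cap U$ absorbs at most one $B$-edge) together with the distribution of the candidate edges would force one candidate edge whose removal keeps every tight set above the threshold, yielding the desired $e$ and completing the induction.
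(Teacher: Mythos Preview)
The paper does not give a standalone proof of this theorem; it is quoted as classical and only re-derived later as a special case of Theorem~\ref{thm:main}. The natural point of comparison is the proof of Theorem~\ref{thm:goodbranching}, which the paper explicitly calls a refinement of Lov\'asz's 1976 proof of Edmonds' theorem, and your plan is precisely that Lov\'asz framework: peel off one spanning arborescence $B$ so that $D-B$ still meets the cut condition with $k-1$, growing $B$ one edge at a time.

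The outline is correct, but your sketch of the extendability lemma has a real gap. Taking $Y$ to be the union of the blocker sets $X_e$ is unsafe: tight sets uncross only when they intersect, and the $X_e$ may be pairwise disjoint, so $Y$ need not be tight and the comparison ``$\varrho_{D-B}(Y)=k-1$ versus $\varrho_D(Y)\ge k$'' is simply unavailable. Your alternative, a minimal tight set through some chosen $v\in V\setminus U$, is the right kind of object, but the counting you propose --- bounding the $B$-edges into $Y\cap U$ by $|Y\cap U|$ because $B$ is a branching --- only yields $|Y\cap U|\ge 1$, which does not locate a safe edge.

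What is missing, and what the paper's proof of Theorem~\ref{thm:goodbranching} does in the more general setting, is this: among those $v\in V\setminus U$ that lie in some tight set, pick one for which $A=T^{D-B}_v$ has \emph{minimum cardinality}. Then compare $\varrho_{D-B}(A)=k-1$ not with $\varrho_D(A)$ but with $\varrho_{D-B}\bigl(A\cap(V\setminus U)\bigr)=\varrho_D\bigl(A\cap(V\setminus U)\bigr)\ge k$; this forces an edge $e=(u,w)$ of $D-B$ with $u\in A\cap U$ and $w\in A\cap(V\setminus U)$. By the minimality of $|A|$ together with $T^{D-B}_w\subseteq A$ (Lemma~\ref{lem:trivsubmod}) one gets $T^{D-B}_w=A\ni u$, and then Lemma~\ref{lem:T_v} guarantees that $e$ enters no tight set whatsoever. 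That is the step your uncrossing sketch does not reach.
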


Since the condition $\varrho_D(X)\ge k$ for every $X\subseteq V$ is
equivalent to $\lambda_D(r,v)\ge k$ for every $v\in V$ by Menger's
theorem, the following generalization of the above theorem (originally
raised by N. Robertson) would feel natural: for every directed graph
$D=(V+r,E)$ with root node $r$ there exist a set of edge-disjoint
branchings such that $r$ is the single root of all of these branchings
and every vertex $v\in V$ is in the head-set of exactly
$\lambda_D(r,v)$ of these branchings. However, this claim is false, a
simple counterexample was given in \cite{Lovasz73}. This observation
prompted Lov\'asz to prove the weaker claim of
Theorem~\ref{thm:Lovasz} -- which, however, does not imply
Theorem~\ref{thm:Edmonds}. In this section we prove another weakening
of this false conjecture that is strong enough to be a common
generalization of Theorems~\ref{thm:Edmonds} and \ref{thm:Lovasz}.

We start by the following theorem, the proof of which will be a
refinement of Lov\'asz's proof given for Theorem~\ref{thm:Edmonds} in
\cite{Lovasz76}.

\begin{thm}\label{thm:goodbranching}
  Assume that $D=(V+r,E)$ is a directed graph with root node $r$. Let
  $m=\max_{v\in V}\lambda_D(r,v)\ge1$ and $V_m=\{v\in
  V:\lambda_D(r,v)=m\}$. Then there exists a branching $B$ in $D$ such
  that its head-set is $V_m$, $\lambda_{D-B}(r,v)=\lambda_D(r,v)$ for
  every $v\in V\setminus V_m$ and $\lambda_{D-B}(r,v)\ge m-1$ for
  every $v\in V_m$.
\end{thm}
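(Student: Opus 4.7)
\medskip

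The plan is to adapt Lov\'asz's proof of Edmonds' theorem from \cite{Lovasz76} to the present setting: the branching $B$ will be constructed one edge at a time, maintaining throughout the construction the invariant that $B$ is a branching whose head-set $U$ is a subset of $V_m$ and $\lambda_{D-B}(r,v) \ge \lambda_D(r,v) - [v \in U]$ for every $v \in V$, where $[\,\cdot\,]$ denotes the Iverson bracket. Starting from $B = U = \emptyset$ the invariant is trivial, and once $U = V_m$ the theorem follows: for $v \in V \setminus V_m$ the invariant forces $\lambda_{D-B}(r,v) = \lambda_D(r,v)$, and for $v \in V_m$ it gives $\lambda_{D-B}(r,v) \ge m-1$.

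In the inductive step, given $(B, U)$ with $U \subsetneq V_m$, I need to find a pair $(v_0, e)$ with $v_0 \in V_m \setminus U$ and $e = (u, v_0) \in E \setminus B$ such that, first, $u$ lies outside the weak component of $v_0$ in $B$ (so that $B + e$ remains a branching) and, second, $e$ does not enter any $w$-tight set of $D - B$ for any $w \in V \setminus \{v_0\}$ at which the invariant is currently tight (i.e.\ for which $\lambda_{D-B}(r,w) = \lambda_D(r,w) - [w \in U]$). The second condition is exactly the obstruction analysed in Lemma~\ref{lem:T_v}. I would pick $v_0 \in V_m \setminus U$ so that the weak component of $v_0$ in $B$ is inclusion-minimal (a singleton if possible) and, among those, so that the maximal $v_0$-tight set $M^{D-B}_{v_0}$ is inclusion-minimal; then argue by contradiction. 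Suppose every in-edge of $v_0$ in $D-B$ violates one of the two conditions. Since by the invariant $\lambda_{D-B}(r,v_0) = m$, the vertex $v_0$ has at least $m$ in-edges in $D - B$, so a submodular uncrossing via Lemma~\ref{lem:trivsubmod} applied to the forbidden tight sets containing $v_0$ should exhibit a $v_0$-tight set in $D - B$ of in-degree strictly less than $m$, which is a contradiction.

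The main obstacle I anticipate is the interplay of the two constraints above, aggravated by the fact that as $B$ grows the invariant may become tight at some $w \in U$ (i.e.\ $\lambda_{D-B}(r, w) = m - 1$); such ``rigid'' vertices contribute extra forbidden tight sets to the uncrossing argument, on top of those coming from vertices in $V \setminus V_m$ and in $V_m \setminus (U + v_0)$, and they must be handled uniformly. I expect that the right choice of $v_0$ is guided by the partial order on $V_m \setminus U$ given by inclusion of the sets $M^{D-B}_v$ (cf.\ Lemma~\ref{lem:trivsubmod}(2)), so that the uncrossing argument reduces to essentially the same combinatorics used for Edmonds' theorem in \cite{Lovasz76}.
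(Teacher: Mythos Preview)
Your plan follows the right template (greedy augmentation \`a la Lov\'asz), but two concrete devices that make the paper's proof go through are missing from your sketch, and without them the argument does not close.

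\medskip
\textbf{The root-set constraint.} The paper imposes, in addition to your invariant, that the root-set of $B$ be contained in $(V+r)\setminus V_m$. This forces every vertex of $Z=V_m\setminus U$ to be \emph{isolated} in $B$, so that adding any edge with head in $Z$ automatically keeps $B$ a branching. Your ``inclusion-minimal weak component'' heuristic does not deliver this: a vertex $v_0\in Z$ may already be the root of a nontrivial $B$-component, and then every in-edge of $v_0$ whose tail lies in that component is forbidden by condition~(1). You give no mechanism that rules this out, and the uncrossing you propose does not interact with weak components at all.

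\medskip
\textbf{The weaker invariant and the use of $T_v$ rather than $M_v$.} Your invariant keeps $\lambda_{D-B}(r,v)=m$ for every $v\in Z$ throughout. This means the ``forbidden'' tight sets you must avoid include all $w$-tight sets for $w\in Z\setminus\{v_0\}$, and these have in-degree exactly $m$ in $D-B$. Uncrossing such sets via Lemma~\ref{lem:trivsubmod} again yields sets of in-degree $m$, never strictly less, so your hoped-for contradiction (``a $v_0$-tight set of in-degree strictly less than $m$'') does not materialise. The paper sidesteps this by \emph{relaxing} the invariant: it allows $\lambda_{D-B}(r,v)$ to drop to $m-1$ for $v\in Z$. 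When that happens (the paper's Case~2) one picks $v\in Z$ with $A=T^{D-B}_v$ inclusion-minimal---the \emph{smallest} tight set, not $M_v$---and compares $\varrho_{D-B}(A)=m-1$ with $\varrho_{D-B}(A\cap Z)\ge m$ to locate an edge $e=(u,w)$ with $u\in A\setminus Z$, $w\in A\cap Z$. Then Lemma~\ref{lem:T_v} makes $e$ safe for all connectivities, and $u\notin Z$ (together with the root-set constraint) makes $B+e$ a branching with the required head- and root-sets. Neither step is available under your stronger invariant or with $M_{v_0}$ in place of $T_v$.
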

\begin{proof}
  Choose a branching $B$ of maximum size such that its head-set is a
  subset of $V_m$, its root-set is a subset of $(V+r)\setminus V_m$,
  $\lambda_{D-B}(r,v)=\lambda_D(r,v)$ for every $v\in V\setminus V_m$
  and $\lambda_{D-B}(r,v)\ge m-1$ for every $v\in V_m$. The empty set
  fulfills these requirements so $B$ exists. We claim that $|B|=|V_m|$
  and hence the head-set of $B$ is $V_m$ which will complete the
  proof. So assume towards a contradiction that $|B|<|V_m|$ and let
  $Z\subseteq V_m$ consist of those vertices $v\in V_m$ that are not
  entered by an edge of $B$.

  Assume first that $\lambda_{D-B}(r,v)=m$ holds for every $v\in
  Z$. Then let $e=(u,v)$ be an arbitrary edge of $D-B$ that enters
  $Z$; obviously, $e$ exists since $\varrho_{D-B}(Z)\ge m$ must
  hold. We claim that $B+e$ fulfills all the requirements on
  $B$. Indeed, $\lambda_{D-B}(r,v)=m$ for every $v\in Z$ implies
  $\varrho_{D-B}(X)\ge m$ for all $X\subseteq V$, $X\cap
  Z\ne\emptyset$ and hence for every vertex $w$ with
  $\lambda_{D-B}(r,w)\le m-1$, $e$ cannot enter any tight set of $w$
  in $D-B$ implying $\lambda_{D-B}(r,w)=\lambda_{D-B-e}(r,w)$ for
  every such $w$. Furthermore, $\lambda_{D-B-e}(r,w)\ge m-1$ is clear
  for every vertex $w$ with $\lambda_{D-B}(r,w)=m$. Since $B+e$ is
  obviously a branching, its head-set is still a subset of $V_m$ by
  $v\in Z$ and its root-set is either the same as that of $B$, or if
  $u$ is a new root then $u\notin V_m$, we have a contradiction.

  Consequently, we can assume that there exists a $v\in Z$ such that
  $\lambda_{D-B}(r,v)=m-1$. Out of all such vertices choose $v$ such
  that $|T^{D-B}_v|$ is minimum and denote $A=T^{D-B}_v$. Then
  $\varrho_{D-B}(A)=m-1$ by the definition of $T^{D-B}_v$. For every
  vertex $w\in A\cap Z$ we have $T^{D-B}_w\subseteq A$ by
  Lemma~\ref{lem:trivsubmod} which implies $T^{D-B}_w=A$ by the choice
  of $v$. Furthermore, $\varrho_{D-B}(A\cap Z)\ge m$ since
  $\varrho_{D-B}(A\cap Z)=\varrho_D(A\cap Z)$ as no edge of $B$ enters
  $Z$ and $\varrho_D(A\cap Z)\ge m$ by $v\in A\cap Z\subseteq
  V_m$.

  Now $\varrho_{D-B}(A)=m-1$ and $\varrho_{D-B}(A\cap Z)\ge m$
  together imply the existence of an edge $e=(u,w)$ of $D-B$ such that
  $w\in A\cap Z$ and $u\in A\setminus Z$. We again claim that $B+e$
  fulfills all the requirements on $B$ made at the beginning of the
  proof. Indeed, $u,w\in A=T^{D-B}_w$ imply by Lemma~\ref{lem:T_v}
  that $\lambda_{D-B-e}(r,v)=\lambda_{D-B}(r,v)$ holds for every $v\in
  V$. Furthermore, $B+e$ is again obviously a branching and the
  requirements on its head-set and root-set hold the same way as
  above. Therefore we again have a contradiction which completes the
  proof.
\end{proof}

This theorem implies the following result on flames.

\begin{thm}\label{thm:flamedecomp}
  Assume that $F=(V+r,E)$ is a flame with root node $r$ and
  $\varrho_F(r)=0$. Let $m=\max_{v\in V}\varrho_F(v)\ge1$ and
  $V_i=\{v\in V:\varrho_F(v)\ge i\}$ for $i=1,2,\ldots,m$. Then $E$
  can be partitioned into the disjoint union of the branchings
  $B_1,B_2,\ldots,B_m$ such that the head-set of $B_i$ is $V_i$ and
  $B_1\cup B_2\cup\ldots\cup B_i$ is a flame for every
  $i=1,2,\ldots,m$.
\end{thm}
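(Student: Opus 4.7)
The plan is to prove the statement by induction on $m$, using Theorem~\ref{thm:goodbranching} to peel off the top branching $B_m$ at each step. When $m = 0$, every $v \in V$ has $\varrho_F(v) = 0$ and together with the assumption $\varrho_F(r) = 0$ this forces $E = \emptyset$, so the statement holds vacuously. For $m \ge 1$, since $F$ is a flame we have $\max_{v \in V}\lambda_F(r, v) = \max_{v \in V}\varrho_F(v) = m$, so the ``$V_m$'' of Theorem~\ref{thm:goodbranching} applied to $F$ coincides with the top level set of our statement. The theorem delivers a branching $B_m \subseteq E$ with head-set exactly $V_m$ such that $\lambda_{F - B_m}(r, v) = \lambda_F(r, v) = \varrho_F(v)$ for every $v \in V \setminus V_m$ and $\lambda_{F - B_m}(r, v) \ge m - 1$ for every $v \in V_m$.

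The crucial step is to verify that $F - B_m$ is itself a flame. For $v \in V \setminus V_m$ no edge of $B_m$ enters $v$, so $\varrho_{F - B_m}(v) = \varrho_F(v)$ and the identity above yields $\lambda_{F - B_m}(r, v) = \varrho_{F - B_m}(v)$. For $v \in V_m$, the vertex $v$ is a head of $B_m$ and hence entered by exactly one edge of $B_m$, so $\varrho_{F - B_m}(v) = m - 1$; the double inequality $m - 1 \le \lambda_{F - B_m}(r, v) \le \varrho_{F - B_m}(v) = m - 1$ forces equality. Thus $F - B_m$ is a flame with maximum in-degree $m - 1$, and for every $i \le m - 1$ its $i$-th level set is again $V_i$: the only vertices whose in-degree drops are those in $V_m$, and they now sit at level exactly $m - 1 \ge i$.

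Applying the induction hypothesis to $F - B_m$ (the condition $\varrho_{F - B_m}(r) = 0$ is inherited from $F$) yields branchings $B_1, \ldots, B_{m-1}$ partitioning $E \setminus B_m$ with head-sets $V_1, \ldots, V_{m-1}$ and such that $B_1 \cup \cdots \cup B_i$ is a flame for every $i \le m - 1$. Appending $B_m$ gives the desired partition of $E$ into branchings with head-sets $V_1, \ldots, V_m$, and the final prefix $B_1 \cup \cdots \cup B_m = E$ is the flame $F$ itself. The substantive content is entirely contained in Theorem~\ref{thm:goodbranching}; once that is in hand, the only step that requires genuine verification is that $F - B_m$ remains a flame, which reduces to the in-degree bookkeeping above, so the hard work has effectively already been done in the previous theorem.
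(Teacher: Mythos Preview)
Your proof is correct and follows essentially the same approach as the paper: induct on $m$, peel off $B_m$ via Theorem~\ref{thm:goodbranching}, verify that $F-B_m$ is again a flame with maximum in-degree $m-1$, and recurse. The only cosmetic difference is that the paper takes $m=1$ as the base case (an arborescence after removing isolated vertices) whereas you push the base down to the vacuous case $m=0$; your explicit check that the level sets $V_i$ for $i\le m-1$ are unchanged in $F-B_m$ is a detail the paper leaves implicit.
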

\begin{proof}
  We proceed by induction on $m$. If $m=1$ then after deleting all
  isolated vertices from $F$ we get an arborescence rooted at $r$ so
  the claim is obvious. (Observe that $\varrho_F(v)=0$ for a vertex
  $v\in V$ implies that $v$ is isolated since
  $\varrho_F(w)=\lambda_F(r,w)$ could not hold for the endpoint of an
  edge $e=(v,w)$.) So assume $m\ge 2$ and choose the branching $B_m$
  in $F$ using Theorem~\ref{thm:goodbranching}. Then for every $v\in
  V\setminus V_m$ we have
  $\lambda_{F-B_m}(r,v)=\lambda_F(r,v)=\varrho_F(v)=\varrho_{F-B_m}(v)$
  where the last equation holds since the head-set of $B_m$ is
  $V_m$. Furthermore, for every $v\in V_m$ we have
  $\varrho_{F-B_m}(v)=m-1$ which, together with
  $\lambda_{F-B_m}(r,v)\ge m-1$, also implies
  $\lambda_{F-B_m}(r,v)=m-1$. Therefore $F-B_m$ is a flame and
  $\max_{v\in V}\varrho_{F-B_m}(v)=m-1$ so using induction on $F-B_m$
  completes the proof.
\end{proof}

It is important to point out that Theorem~\ref{thm:flamedecomp} is not
implied by the well-known, stronger version of Edmonds' disjoint
arborescences theorem that gives a necessary and sufficient condition
on the existence of edge-disjoint brancings in a directed graph with
given, prescribed head-sets (see \cite[Theorem
  53.1]{Schrijver}). Obviously, if applied on a flame in the context
of Theorem~\ref{thm:flamedecomp}, it would imply the existence of the
edge-disjoint branchings $B_1,B_2,\ldots,B_m$ such that the head-set
of $B_i$ is $V_i$, but the extra requirement that $B_1\cup
B_2\cup\ldots\cup B_i$ be a flame for every $i$ would not necessarily
hold. For example, the directed graph in Figure~\ref{fig:fig2} is a
flame (with root $r$) and the branchings $B_1,B_2,B_3$ made up of the
dashed, regular and thick edges, respectively, fulfill all other
requirements of Theorem~\ref{thm:flamedecomp}, but $B_1\cup B_2$ is
not a flame.

\begin{figure}
\centering
\includegraphics{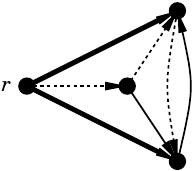}
\caption{Theorem~\ref{thm:flamedecomp} is not implied by Edmonds'
  disjoint branchings theorem.}
\label{fig:fig2}
\end{figure}

Finally, the following theorem is meant to be the main result of
this section.

\begin{thm}\label{thm:main}
  Assume that $D=(V+r,E)$ is a directed graph with root node $r$ and
  let $m=\max_{v\in V}\lambda_D(r,v)\ge1$. Then there exist the
  edge-disjoint branchings $B_1,B_2,\ldots,B_m$ in $D$ such that
  $\lambda_{D_i}(r,v)=\varrho_{D_i}(v)=\min\{\lambda_D(r,v),i\}$ holds
  for every $v\in V$ and $i=1,2,\ldots,m$, where $D_i$ denotes the
  digraph $D_i=(V+r,B_1\cup B_2\cup\ldots\cup B_i)$.
\end{thm}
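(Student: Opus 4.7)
The plan is to derive the theorem by directly combining the two main results already available: Lov\'asz's Theorem~\ref{thm:Lovasz} produces a spanning flame that simultaneously realises every local connectivity value as an in-degree, and Theorem~\ref{thm:flamedecomp} then chops such a flame up into the desired nested sequence of branchings. So the whole argument should reduce to setting up the right flame and reading off the conclusion.

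Concretely, I would first delete from $D$ all edges entering $r$; these edges do not affect any $\lambda_D(r,v)$ with $v\in V$, and removing them ensures the property $\varrho_F(r)=0$ needed to invoke Theorem~\ref{thm:flamedecomp}. Apply Theorem~\ref{thm:Lovasz} to the resulting digraph to obtain a spanning subgraph $F$ with $\lambda_D(r,v)=\lambda_F(r,v)=\varrho_F(v)$ for every $v\in V$. Then $F$ is a flame whose maximum in-degree in $V$ is exactly the number $m$ of the theorem, and the sets $V_i=\{v\in V:\varrho_F(v)\ge i\}$ appearing in Theorem~\ref{thm:flamedecomp} coincide with $\{v\in V:\lambda_D(r,v)\ge i\}$.

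Next, apply Theorem~\ref{thm:flamedecomp} to $F$ to partition its edge set into branchings $B_1,\ldots,B_m$ such that the head-set of $B_i$ is $V_i$ and $D_i=(V+r,B_1\cup\cdots\cup B_i)$ is a flame for every $i$. Since $D_i$ is a flame, $\lambda_{D_i}(r,v)=\varrho_{D_i}(v)$ holds automatically for every $v\in V$, so only the in-degree identity needs verifying. For each $v$, the number of $j\le i$ for which $v$ lies in the head-set of $B_j$ is $|\{j\le i:v\in V_j\}|=\min\{i,\lambda_D(r,v)\}$, which gives $\varrho_{D_i}(v)=\min\{i,\lambda_D(r,v)\}$ as required. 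Finally, since $F$ is a subgraph of $D$, the branchings $B_1,\ldots,B_m$ are edge-disjoint branchings of $D$.

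There is no real obstacle here beyond bookkeeping: the genuine difficulty of the statement is already absorbed into Theorem~\ref{thm:flamedecomp} (which is where the refinement of Lov\'asz's proof of Edmonds' theorem lives) and into Theorem~\ref{thm:Lovasz}. The only mild point to be careful about is the assumption $\varrho_F(r)=0$ required by Theorem~\ref{thm:flamedecomp}, which is handled by the preliminary removal of edges entering $r$; everything else is a direct chain of equalities.
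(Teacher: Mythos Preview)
Your proposal is correct and follows exactly the same route as the paper, which simply says the theorem ``follows directly from Theorems~\ref{thm:Lovasz} and \ref{thm:flamedecomp} by applying the latter on a maximal flame $F$ of $D$ given by the former.'' You have merely filled in the bookkeeping (the $\varrho_F(r)=0$ point and the in-degree count $\varrho_{D_i}(v)=\min\{i,\lambda_D(r,v)\}$) that the paper leaves implicit.
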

\begin{proof}
  This follows directly from Theorems~\ref{thm:Lovasz} and
  \ref{thm:flamedecomp} by applying the latter on a maximal flame $F$
  of $D$ given by the former.
\end{proof}

Observe that Theorem~\ref{thm:main} is a common generalization of
Theorems~\ref{thm:Lovasz} and \ref{thm:Edmonds}. Indeed, $B_1\cup
B_2\cup\ldots\cup B_m$ obviously fulfills all requirements of
Theorem~\ref{thm:Lovasz}. Furthermore, if $\lambda_D(r,v)\ge k$ for
every $v\in V$ for some $k$ then $\varrho_{D_k}(v)=k$ holds for every
$v\in V$ which implies that $B_1,\ldots,B_k$ are arborescences rooted
are $r$, so Theorem~\ref{thm:Edmonds} follows.

\section{Conclusions}

In this paper we focused on two separate topics, both related to
flames. In Section~\ref{sect:mwmf} we addressed the problem of finding
a minimum weight maximal flame and we gave an efficient, strongly
polynomial time algorithm for the case where the directed graph is
acyclic. In Section~\ref{sect:decomp} we proved that every flame can
be decomposed into an chain of smaller flames via edge-disjoint
branchings and we used this to give a common generalization of two
well-known, classic results, Edmonds' disjoint arborescences theorem
(Theorem~\ref{thm:Edmonds}) and Lov\'asz's theorem on the existence of a
flame preserving all local edge-connectivity values
(Theorem~\ref{thm:Lovasz}).

While these two topics may not seem to have much in common besides
both being related to flames, it might still be worth pointing out a
more hidden connection: the relation to matroids. In
Section~\ref{sect:mwmf} we relied on the fact that every maximal flame
of the directed graph $D=(V+r,E)$ with root $r$ is a base of the
matroid $\mathcal{M}=\cup_{v\in V}\mathcal{G}_D(v)$ (see the
definition there). While the converse is not true in general, the
major tool of Section~\ref{sect:mwmf} was the observation that it is
true for acyclic directed graphs. Although matroids were not
specifically mentioned in Section~\ref{sect:decomp}, it is still not
hard to observe that they were there implicitly:
Theorem~\ref{thm:flamedecomp} implies that every maximal flame is also
the base of another matroid. Indeed, using the notations of
Theorem~\ref{thm:main}, let $G_i$ denote the undirected graph obtained
from $D$ by disregarding the directions of the edges and contracting
the vertex set $\{r\}\cup\{v\in V:\lambda_D(r,v)<i\}$ for
$i=1,2,\ldots,m$. Then the edge set of the branching $B_i$ becomes a
spanning tree of $G_i$. Therefore Theorem~\ref{thm:flamedecomp}
implies that every maximal flame in $D$ is a base of the matroid
$\mathcal{N}$ obtained as a sum (or union) of the cycle matroids of
the graphs $G_i$ (viewed on the common ground set $E$).

It follows from the above that every maximal flame of $D$ is a common
base of the matroids $\mathcal{M}$ and $\mathcal{N}$ defined above on
the common ground set $E$. Unfortunately though, the converse is not
true as shown by the example of Figure~\ref{fig:fig1}: deleting the
dashed edges yields a base not only of $\mathcal{M}$ (as also
mentioned in Section~\ref{sect:mwmf}), but also of $\mathcal{N}$ since
the remaining edges can be partitioned into the bases of the cycle
matroids of the graphs $G_1$ and $G_2$ as shown by regular and thick
edges, respectively.

It would be interesting to identify a class of digraphs (wider than
that of acyclic ones) for which common bases of $\mathcal{M}$ and
$\mathcal{N}$ are maximal flames since this would imply that the
minimum weight maximal flame problem is solvable for this class of
graphs in strongly polynomial time via weighted matroid
intersection. (This approach would be a generalization of the one
mentioned in Section~\ref{sect:intro} for the case where
$\lambda_D(r,v)=k$ for every $v\in V$.) Furthermore, it would
obviously be also very interesting to settle the complexity of the
minimum weight maximal flame problem for general directed graphs too.

\end{document}